\newtheorem{theorem}{Theorem}[section]
\newtheorem{proposition}[theorem]{Proposition}
\newtheorem{lemma}[theorem]{Lemma}
\newtheorem{remark}[theorem]{Remark}
\newcommand{\bsx}{\boldsymbol{x}}
\newcommand{\bsy}{\boldsymbol{y}}
\def\bN{\mathbb N}
\def\bF{{\mathbb F}}
\def\bsx{{\boldsymbol{x}}}
\def\bsy{{\boldsymbol{y}}}
\newcommand{\Lset}[1][m]{\mathcal{L}_{#1}}
\newcommand{\Uset}[1][m]{\mathcal{U}_{#1}}
\newcommand{\Ftwo}{\mathbb{F}_2}
\newcommand{\FtwoMat}{\mathbb{F}_2^{m \times m}}
\newenvironment{enuroman}{\begin{enumerate}[\normalfont (i)]}{\end{enumerate}}
\begin{document}

\title{
Characterization of digital $(0,m,3)$-nets and digital $(0,2)$-sequences in base $2$\thanks{subclass 11K31, 11K38}}

\author{
Roswitha Hofer\thanks{Institute of Financial Mathematics and Applied Number Theory, Johannes Kepler University Linz, Altenbergerstr. 69, 4040 Linz, Austria. e-mail: roswitha.hofer@jku.at}  and 
Kosuke Suzuki\thanks{Graduate School of Science, Hiroshima University. 1-3-1 Kagamiyama, Higashi-Hiroshima, 739-8526, Japan. JSPS Research Fellow. e-mail: kosuke-suzuki@hiroshima-u.ac.jp}
}

\date{\today}
\maketitle

\begin{abstract}
We give a characterization 
of all matrices $A,B,C \in \FtwoMat$
which generate a $(0,m,3)$-net in base $2$ and a characterization of all matrices $B,C\in\bF_2^{\bN\times\bN}$ which generate a $(0,2)$-sequence in base $2$.
\end{abstract}

\section{Introduction and main results}
The algorithms for constructing digital $(t,m,s)$-nets and digital $(t,s)$-sequences, which were introduced by Niederreiter \cite{Niederreiter1987pss}, are well-established methods to obtain low-discrepancy point sets and low-discrepancy sequences. Low-discrepancy point sets and sequences are the main ingredients of quasi-Monte Carlo quadrature rules for numerical integration (see for example \cite{ Dick2010dna,Niederreiter1992rng} for details).
The purpose of this paper is to characterize digital nets and sequences in base $2$ with best possible quality parameter $t$. We start the paper with introducing the algorithm for digital $(t,m,s)$-nets and digital $(t,s)$-sequences in base $2$ and defining the quality parameter $t$. 

Let $\bN$ be the set of all positive integers,
$\Ftwo = \{0,1\}$ be the field of two elements.
For a positive integer $m$, $\FtwoMat$ denotes the set of all $m \times m$ matrices over $\Ftwo$.
For a nonnegative integer $n$, we write the $2$-adic expansion of $n$
as $n = \sum_{i=1}^\infty z_i(n) 2^{i-1}$ with $z_i(n) \in \Ftwo$,
where all but finitely many $z_i(n)$ equal zero.
For $k \in \bN \cup \{\infty\}$,
we define the function $\phi_k \colon \Ftwo^k \to [0,1]$ as
\[
\phi_k((y_1, \dots, y_{k})^\top) := \sum_{i=1}^k \frac{y_{i}}{2^i}.
\]

Let $s$, $m\in\bN$, and $C_1, \dots, C_s \in \FtwoMat$. The digital net generated by $(C_1, \dots, C_s)$ is a set of $2^m$ points in $[0,1)^s$
that is constructed as follows.
We define $\bsy_{n,j} \in \Ftwo^m$ for $0 \leq n < 2^m$ and $1 \leq j \leq s$ as
\[
\bsy_{n,j} := C_j \cdot (z_1(n), \dots, z_m(n))^\top \in \Ftwo^m.
\]
Then we obtain the $n$-th point $\bsx_n$ by applying $\phi_m$ componentwise to the $\bsy_{n,j}$, i.e.,
\begin{equation*}%\label{eq:pt-generation}
\bsx_n := (\phi_m(\bsy_{n,1}), \dots, \phi_m(\bsy_{n,s})).
\end{equation*}
Finally letting $n$ range between $0$ and $2^m-1$ we obtain the point set $\{\bsx_0, \dots, \bsx_{2^m-1}\} \subset [0,1)^s$ that is called the digital net generated by $(C_1, \dots, C_s)$. 

In a similar way,
for $C_1, \dots, C_s \in \Ftwo^{\bN \times \bN}$ %indexed by $\bN=\{0,1,2, \dots \}$.
the digital sequence generated by $(C_1, \dots, C_s)$ is the sequence of points in $[0,1]^s$ that is constructed as follows.
We define $\bsy_{n,j} \in \Ftwo^\bN$ for $n\in\bN\cup\{0\}$ and $1 \leq j \leq s$ as
\(
\bsy_{n,j} := C_j \cdot(z_1(n), z_2(n) ,\dots)^\top \in \Ftwo^\bN.
\)
This matrix-vector multiplication is well-defined
since almost all $z_i(n)$ equal zero.
Then we obtain the $n$-th point $\bsx_n$ by setting
\(
\bsx_n := (\phi_\infty(\bsy_{n,1}), \dots, \phi_\infty(\bsy_{n,s})).
\)
The digital sequence generated by $(C_1, \dots, C_s)$
is the sequence of points $\{\bsx_0, \bsx_1, \dots \} \subset [0,1]^s$.

The nonnegative integer $t$ in the notions of $(t,m,s)$-nets and $(t,s)$-sequences quantifies in a certain sense the uniformity of digital nets and sequences. 
A set $\mathcal{P}$ of $2^m$ points in $[0,1)^s$ is said to a $(t,m,s)$-net in base $2$
if every subinterval of the form
\[
\prod_{i=1}^s [a_{i}/2^{c_{i}}, (a_{i} +1)/2^{c_{i}})  \qquad \text{with integers $c_i \geq 0$ and $0 \leq a_i < 2^{c_i}$}
\]
and of volume $2^{t-m}$ contains exactly $2^t$ points from $\mathcal{P}$.
For the definition of $(t,s)$-sequences in base $2$,
we need to introduce the truncation operator.
For $x \in [0,1]$ with the prescribed $2$-adic expansion $x = \sum_{i=1}^\infty {x_{i}}/{2^i}$
(where the case $x_i = 1$ for almost all $i$ is allowed),
we define the $m$-digit truncation $[x]_m := \sum_{i=1}^m {x_{i}}/{2^i}$.
For $\bsx = (x_1, \dots, x_s) \in [0,1]^s$,
the coordinate-wise $m$-digit truncation of $\bsx$ is defined as
$[\bsx]_m := ([x_1]_m, \dots, [x_s]_m)$.
A sequence $\mathcal{S} = \{\bsx_0, \bsx_1, \dots \}$ of points in $[0,1]^s$ with prescribed $2$-adic expansions
is said to be a $(t,s)$-sequence in base $2$
if, for all nonnegative integers $k$ and $m$, the set $\{[\bsx_{k2^m}]_m, \dots, [\bsx_{(k+1)2^m-1}]_m\}$ is a $(t,m,s)$-net in base $2$. Straightforward, we define $(t,m,s)$-nets and $(t,s)$-sequences in base $b$ with $b\in\bN\setminus\{1\}$ by substituting $2$ by $b$ in the definitions above. 

By the definitions of $(t,m,s)$-nets and $(t,s)$-sequences,
a smaller $t$ implies more conditions on the uniformity of the points and of the sequences.
Indeed a smaller $t$ corresponds with a smaller discrepancy bound (cf. \cite{Niederreiter1987pss}). Hence smaller $t$ would be appreciated and $t=0$ is the best possible.
Having lowest possible value $0$ for $t$ has another merit:
the randomized quasi-Monte Carlo estimator of a scrambled $(0,m,s)$-net in base $b$
is asymptotically normal \cite{Loh}.
However,
$t=0$ cannot be attained when $s$ is large.
It is well known that $(0,m,s)$-nets in any base $b$ exist only if $s \leq b+1$
and $(0,s)$-sequences in base $b$ exist only if $s \leq b$ \cite[Corollary~4.24]{Niederreiter1992rng}.
On the other hand, there are many known digital $(0,b)$-sequences in prime base $b$,
including the two-dimensional Sobol$^\prime$ sequence for $b=2$ \cite{Sobolcprime1967dpi},
Faure sequences \cite{Faure1982dds},
generalized Faure sequences \cite{Tezuka}, and its reordering \cite{FaureTezuka}.
From these sequences we can construct digital $(0,m,b+1)$-nets in base $b$,
see \cite[Lemma~4.22]{Niederreiter1992rng} or Lemma~\ref{lem:seq-to-net}. 

A characterization of $(0,m,3)$-net in base $2$ generated by $(I,B,B^2)$ with some $B \in \FtwoMat$
was given in \cite{Kajiura} and a characterization of $(0,2)$-sequences in base $2$ generated by NUT matrices $(C_1,C_2)$ was given in \cite{Hofer2010edc}.
Our contribution in this note is to characterize all generating matrices
of digital $(0,m,3)$-nets and digital $(0,2)$-sequences in base $2$.

For the statements of our results, we introduce some notation.
Let $I_m$ be the $m \times m$ identity matrix in $\bF_2^{m\times m}$.
Let $J_m$ be the $m \times m$ anti-diagonal matrix in $\bF_2^{m\times m}$ whose anti-diagonal entries are all $1$,
and $P_m$ be the $m \times m$ upper-triangular Pascal matrix in $\bF_2^{m\times m}$, i.e.,
\[
J_m =
\begin{pmatrix}
0 &  & 1\\
& \iddots \\
1 & &  0
\end{pmatrix},
\qquad
P_m = \left(\binom{j-1}{i-1}\right)_{i,j=1}^m =
\begin{pmatrix}
\binom{0}{0} & \binom{1}{0} & \dots & \binom{m-1}{0}\\
 & \binom{1}{1} & & \vdots   \\
 & & \ddots & \vdots  \\
 & &  & \binom{m-1}{m-1}
\end{pmatrix},
\]
which are considered modulo $2$.
If there is no confusion, we omit the subscripts
and simply write $I$, $J$, and $P$.
Let $\Lset$ (resp.\ $\Uset$) be the set of non-singular lower- (resp.\ upper-) triangular $m \times m$ matrices
over $\Ftwo$.
Note that $\Lset \cap \Uset = \{I\}$ holds.
Let  $\Lset[\infty]$ (resp.\ $\Uset[\infty]$) be the set of non-singular lower (resp.\ upper) triangular infinite matrices over $\Ftwo$.
Let $P_\infty$ be the infinite Pascal matrix,
i.e., whose $m \times m$ upper-left submatrix is $P_m$ for all $m \geq 1$.
Note that for $C\in\bF_2^{\bN\times\bN}$ and $L\in\Lset[\infty],\,U\in\Uset[\infty]$ the products $LC$ and $CU$ are well defined and $(LC)U=L(CU)$. 
For a finite or infinite matrix $C$ and for $k \in \bN$ we write $C^{(k)} \in \Ftwo^{k \times k}$ for the upper left $k \times k$ submatrix of $C$.

We are now ready to state our main results.
\begin{theorem} \label{thm:char-0m3met-general}
Let $m \geq 1$ be an integer and $A,\,B,\,C \in \FtwoMat$.
Then the following are equivalent.
\begin{enuroman}
\item \label{eq:0m3net-general-equiv1}
$(A,B,C)$ generates a digital $(0,m,3)$-net in base $2$.
\item \label{eq:0m3net-general-equiv2}
There exist $L_1, L_2 \in \Lset$, $U \in \Uset$, and non-singular $M \in \FtwoMat$
such that
\[
(A,B,C) = (JM, L_1UM, L_2PUM).
\]
\end{enuroman}
\end{theorem}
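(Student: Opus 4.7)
The idea is to reduce Theorem~\ref{thm:char-0m3met-general} to the paper's companion characterization of $(0,2)$-sequences in base $2$ via a right-multiplication normalization of $(A,B,C)$ together with a block-wise analysis of the net condition. Three elementary preservation principles for the $(0,m,3)$-net property will be used throughout: (a) simultaneous right multiplication of all three generators by a non-singular $M\in\FtwoMat$ (this merely reindexes the $2^m$ points); (b) left multiplication of any single generator by an element of $\Lset$ (matrices in $\Lset$ preserve the row span of the top $k$ rows for every $k$); and (c) permutation of the three generators (this permutes coordinates).

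For (ii)$\Rightarrow$(i), starting from $(A,B,C)=(JM,L_1UM,L_2PUM)$, the three principles reduce the task to showing that $(L_1U,L_2PU,J)$ generates a $(0,m,3)$-net. Extending $L_1,L_2,U$ to infinite matrices by padding with the identity, the pair $(\widetilde L_1\widetilde U,\widetilde L_2 P_\infty \widetilde U)$ generates a $(0,2)$-sequence: this is the easy direction of the companion characterization, which is proved by right-cancelling the non-singular upper-left blocks $U^{(k)}$ and $L_i^{(k)}$ to reduce the $(0,k,2)$-net condition on each leading block to the corresponding condition on $(I_k,P_k)$, which holds because $(I,P_\infty)$ is the Faure $(0,2)$-sequence. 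Lemma~\ref{lem:seq-to-net} then yields the required $(0,m,3)$-net.

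For (i)$\Rightarrow$(ii), taking $(c_1,c_2,c_3)=(m,0,0)$ in the defining property shows that $A$ is non-singular. Setting $M:=JA$, right multiplication by $M^{-1}=A^{-1}J$ reduces to $(J,B',C')$ with $B'=BA^{-1}J$ and $C'=CA^{-1}J$. The central task is to show that $B'=L_1U$ and $C'=L_2PU$ for some $L_1,L_2\in\Lset$ and $U\in\Uset$. For fixed $c_1$ and $c_2+c_3=m-c_1$, post-multiplying the stacked matrix with top block $J^{[c_1]}$ and middle and bottom blocks $B'^{[c_2]},C'^{[c_3]}$ by $J$ converts the top block into $(I_{c_1}\mid 0)$. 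A block row reduction against these identity rows clears the first $c_1$ columns of the two lower blocks, and a column permutation reversing the order of the last $m-c_1$ columns transforms the remaining non-singularity requirement into that of the stacking of $(B'^{(m-c_1)})^{[c_2]}$ over $(C'^{(m-c_1)})^{[c_3]}$. Letting $k=m-c_1$ run over $\{1,\dots,m\}$, the $(0,m,3)$-net condition on $(J,B',C')$ is therefore equivalent to $(B'^{(k)},C'^{(k)})$ being a $(0,k,2)$-net for every $k\le m$. The finite analogue of the companion characterization of $(0,2)$-sequences then forces $B'=L_1U$ and $C'=L_2PU$, and unwinding via $A=JM$ (noting $J^{-1}=J$) gives $B=L_1UM$ and $C=L_2PUM$.

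The principal obstacle in this plan is the column-reversal bookkeeping in the central reduction: although the underlying linear algebra is routine, care is required to ensure that the interplay between post-multiplication by $J$, the attendant column permutation, and the top-rows operator $(\cdot)^{[c]}$ translates the $(0,m,3)$-net condition on $(J,B',C')$ cleanly into the sequence-type condition on $(B',C')$. Once that reduction is in hand, the theorem follows from the companion characterization of $(0,2)$-sequences together with Lemma~\ref{lem:seq-to-net}.
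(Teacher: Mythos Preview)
Your plan is mostly sound, and the block reduction showing that $(J,B',C')$ is a $(0,m,3)$-net iff $(B'^{(k)},C'^{(k)})$ is a $(0,k,2)$-net for every $k\le m$ is correct (it is a finite form of Lemma~\ref{lem:BCandJBC}). But the final step in (i)$\Rightarrow$(ii), where you invoke ``the finite analogue of the companion characterization of $(0,2)$-sequences'' to conclude $B'=L_1U$ and $C'=L_2PU$, is the entire difficulty. Via your own equivalence, that finite analogue is precisely the implication ``$(J,B',C')$ is a $(0,m,3)$-net $\Rightarrow B'=L_1U,\ C'=L_2PU$'', i.e., Proposition~\ref{prop:char-0m3met}. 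You cannot obtain it from the infinite Theorem~\ref{thm:char-02seq} by padding $B',C'$ to matrices in $\bF_2^{\bN\times\bN}$: there is no way to extend them to generators of a $(0,2)$-sequence without already knowing the decomposition $C'=L_2PU$. So your plan does not reduce Theorem~\ref{thm:char-0m3met-general} to Theorem~\ref{thm:char-02seq}; both theorems rest independently on Lemma~\ref{lem:alternative} (the Hofer--Larcher fact that for $U_1,U_2\in\Uset$, $t(J,U_1,U_2)=0$ forces $U_2=PU_1$), and that is the input you still owe.

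The paper's proof is accordingly more direct and avoids your block-reduction detour. After the same normalization $M=JA$, it notes that $(J,B')$ and $(J,C')$ are $(0,m,2)$-nets, so Lemma~\ref{lem:IB-0m2net-structure} gives $B'=L_1U_1$ and $C'=L_2U_2$; then Lemma~\ref{lem:tval-invariant} strips the $L_i$ to obtain $t(J,U_1,U_2)=0$, and Lemma~\ref{lem:alternative} delivers $U_2=PU_1$. Your reduction does recover the LU step as a special case (take $c_2=k$ or $c_3=k$ to see that all leading principal minors of $B'$ and $C'$ are non-singular), but it does not bypass Lemma~\ref{lem:alternative}.
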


\begin{theorem}\label{thm:char-02seq}
Let $B,C\in\bF_2^{\bN\times\bN}$. Then the following are equivalent. 
\begin{enuroman}
\item \label{item:thmBC}
$(B,C)$ generates a digital $(0,2)$-sequence in base $2$.
\item \label{item:thmLPU}
There exist $L_1,L_2\in\mathcal{L}_\infty$ and $U\in\mathcal{U}_\infty$ such that $B=L_1U$ and $C=L_2P_\infty U$. 
\end{enuroman}
\end{theorem}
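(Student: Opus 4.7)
The plan is to reduce Theorem~\ref{thm:char-02seq} to the finite-dimensional characterization in Theorem~\ref{thm:char-0m3met-general} by passing to the upper-left $m\times m$ submatrices and invoking Lemma~\ref{lem:seq-to-net}. As a preliminary step I would show that $(B,C)$ generates a digital $(0,2)$-sequence in base $2$ if and only if $(J_m,B^{(m)},C^{(m)})$ generates a digital $(0,m,3)$-net in base $2$ for every $m\ge 1$. The forward direction is exactly Lemma~\ref{lem:seq-to-net}; for the converse, choosing an elementary interval of side length $1$ in the first coordinate shows that $(B^{(m)},C^{(m)})$ generates a $(0,m,2)$-net, and a direct computation with the digit expansion of $k2^m+i$ shows that every block of $2^m$ consecutive points, truncated to $m$ digits, is a digital shift of the initial block and hence still a $(0,m,2)$-net.

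For the direction (ii)$\Rightarrow$(i), I start from $B=L_1U$ and $C=L_2P_\infty U$ and use the basic identity $(XY)^{(m)}=X^{(m)}Y^{(m)}$ whenever $X$ is lower triangular and $Y$ is upper triangular, which is immediate from the index ranges in the matrix product. This yields $B^{(m)}=L_1^{(m)}U^{(m)}$ and $C^{(m)}=L_2^{(m)}P_mU^{(m)}$; choosing $M=I_m$ in Theorem~\ref{thm:char-0m3met-general} then shows that $(J_m,B^{(m)},C^{(m)})$ is a digital $(0,m,3)$-net for every $m$, which by the equivalence above finishes this direction.

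For the direction (i)$\Rightarrow$(ii), I apply Theorem~\ref{thm:char-0m3met-general} to $(J_m,B^{(m)},C^{(m)})$ for each $m$, obtaining $L_1^{[m]},L_2^{[m]}\in\Lset$, $U^{[m]}\in\Uset$, and a non-singular $M^{[m]}$ with $J_m=J_mM^{[m]}$, $B^{(m)}=L_1^{[m]}U^{[m]}M^{[m]}$, and $C^{(m)}=L_2^{[m]}P_mU^{[m]}M^{[m]}$. Because $J_m$ is invertible, the first equation forces $M^{[m]}=I_m$, so $B^{(m)}=L_1^{[m]}U^{[m]}$ is an LU decomposition of $B^{(m)}$ and $C^{(m)}=L_2^{[m]}(P_mU^{[m]})$ is an LU decomposition of $C^{(m)}$ (using that $P_mU^{[m]}$ is unit upper triangular).

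The main obstacle, and where the argument needs care, is assembling the finite factors $L_1^{[m]},L_2^{[m]},U^{[m]}$ into consistent infinite matrices. The key point is that over $\Ftwo$ a non-singular lower (resp.\ upper) triangular matrix is automatically unit lower (resp.\ unit upper) triangular, so the LU decomposition over $\Ftwo$ is unique whenever it exists. Comparing the LU decomposition of $B^{(m+1)}$ with that of its upper-left block $(B^{(m+1)})^{(m)}=B^{(m)}$ and using $(XY)^{(m)}=X^{(m)}Y^{(m)}$ for unit triangular factors, uniqueness forces $(L_1^{[m+1]})^{(m)}=L_1^{[m]}$ and $(U^{[m+1]})^{(m)}=U^{[m]}$; the analogous argument applied to $C$ yields $(L_2^{[m+1]})^{(m)}=L_2^{[m]}$. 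Consequently, the infinite matrices $L_1,L_2\in\Lset[\infty]$ and $U\in\Uset[\infty]$ defined by $L_i^{(m)}:=L_i^{[m]}$ and $U^{(m)}:=U^{[m]}$ are well defined, and the identities $B=L_1U$ and $C=L_2P_\infty U$ follow by checking upper-left $m\times m$ submatrices for every $m$.
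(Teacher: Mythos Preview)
Your argument is correct; note that your preliminary equivalence is exactly Lemma~\ref{lem:BCandJBC}. The paper organizes the remaining steps differently: instead of reducing to Theorem~\ref{thm:char-0m3met-general} at each finite level and then assembling, it first applies Lemma~\ref{lem:BLUinfty} (which already packages your LU-uniqueness and compatibility argument) to write $B=L_1U_1$ and $C=L_2U_2$ with $L_i\in\Lset[\infty]$ and $U_i\in\Uset[\infty]$, then uses Lemma~\ref{lem:tval-invariant-sequence} to pass to the pair $(U_1,U_2)$, and finally invokes Proposition~\ref{thm:Hofer} to obtain $U_2=P_\infty U_1$. Both routes rest on the same two ingredients---uniqueness of the LU decomposition over $\Ftwo$ and the Pascal-matrix relation in the upper-triangular case---so the difference is purely structural. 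Your approach has the pleasant feature of making Theorem~\ref{thm:char-02seq} a corollary of Theorem~\ref{thm:char-0m3met-general} via Lemma~\ref{lem:BCandJBC}, while the paper's approach keeps the one-dimensional LU step and the upper-triangular characterization as separate reusable lemmas.
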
 

In the rest of the paper, we give auxiliary results in Section~\ref{sec:auxiliary}
and prove the above theorems in Section~\ref{sec:0m3net}.
%%%%%%%%%%%%%%%%%%%%%%%%%%%%%%%%%%%%%%%%%%%%%%%%%%%%%%%%%%%%%%%%%%%%%%
\section{Auxiliary results}\label{sec:auxiliary}

We start with $t$-value-preserving operations.
\begin{lemma}[{\cite[Lemma~2.2]{Kajiura}}]\label{lem:tval-invariant}
Let $C_1, \dots, C_s \in \FtwoMat$ and $L_1, \dots, L_s \in \Lset$.
Let $G \in \FtwoMat$ be non-singular.
Then the following are equivalent.
\begin{enuroman}
\item \label{eq:t-preserving-net-equiv1}
$(C_1, \ldots, C_s)$ generates a digital $(t,m,s)$-net.
\item \label{eq:t-preserving-net-equiv2}
$(L_1C_1G, \ldots, L_sC_sG)$ generates a digital $(t,m,s)$-net.
\end{enuroman} 
\end{lemma}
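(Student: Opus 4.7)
The plan is to reduce to the standard linear algebra characterization of the $t$-value: the tuple $(C_1, \ldots, C_s)$ generates a digital $(t,m,s)$-net in base $2$ if and only if, for every choice of non-negative integers $d_1, \ldots, d_s$ with $d_1 + \cdots + d_s = m - t$, the collection consisting of the first $d_j$ rows of $C_j$, taken jointly over all $j = 1, \ldots, s$, is linearly independent in $\Ftwo^m$. This is Niederreiter's classical criterion (equivalent to the definition of a $(t,m,s)$-net via dyadic boxes), and I would quote it without reproof. Given this characterization, it suffices to verify that the transformation $C_j \mapsto L_j C_j G$ preserves, for every admissible $(d_1, \ldots, d_s)$, the linear independence of the concatenated first-$d_j$-row system.

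The right multiplication by the common non-singular matrix $G$ is essentially immediate: it acts as a bijection $\Ftwo^m \to \Ftwo^m$ on row vectors, so any family of row vectors in $\Ftwo^m$ is linearly independent if and only if its image under right multiplication by $G$ is. This step crucially uses that $G$ is the same matrix across all $s$ coordinates, so the concatenated family is transformed uniformly by a single invertible right action.

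The left multiplication by $L_j$ is where the lower-triangular hypothesis enters. Because $L_j \in \Lset$, the $i$-th row of $L_j C_j$ is an $\Ftwo$-linear combination of rows $1, \ldots, i$ of $C_j$; in particular, for each $j$ the first $d_j$ rows of $L_j C_j$ are obtained from the first $d_j$ rows of $C_j$ by left multiplication by the upper-left $d_j \times d_j$ block $L_j^{(d_j)}$. Since $L_j$ is non-singular lower-triangular, its diagonal is all $1$'s, and therefore $L_j^{(d_j)}$ is also non-singular. Hence the row span of the first $d_j$ rows of $L_j C_j$ equals the row span of the first $d_j$ rows of $C_j$, and linear independence of the concatenation is preserved.

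The whole argument is short once Niederreiter's criterion is in hand, so I do not foresee any serious obstacle. The one point that can be fumbled is the left multiplication step: one has to notice that \emph{lower-triangularity} of $L_j$, rather than mere invertibility, is what keeps the first $d_j$ rows of $L_j C_j$ inside the span of the first $d_j$ rows of $C_j$; without this triangular structure, rows of lower index in $L_j C_j$ could pick up contributions from rows of higher index in $C_j$, destroying the characterization.
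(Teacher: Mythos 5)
Your proof is correct and complete. The paper itself does not prove this lemma---it is quoted verbatim from \cite[Lemma~2.2]{Kajiura}---and your argument via Niederreiter's linear-independence criterion (right multiplication by a common non-singular $G$ acts as a bijection on row vectors, while non-singular lower-triangularity of $L_j$ keeps the first $d_j$ rows of $L_jC_j$ spanning the same subspace as the first $d_j$ rows of $C_j$) is exactly the standard proof that the cited source relies on, including the correct identification of where the triangular structure is genuinely needed.
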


\begin{lemma}\label{lem:tval-invariant-sequence}
Let $C_1, \dots, C_s \in \Ftwo^{\bN \times\bN}$, $L_1, \dots, L_s \in \Lset[\infty]$ and $U \in \mathcal{U}_\infty$. Then the following are equivalent.
\begin{enuroman}
\item \label{eq:t-preserving-equiv1}
$(C_1, \ldots, C_s)$ generates a digital $(t,s)$-sequence.
\item \label{eq:t-preserving-equiv2}
$(L_1C_1U, \ldots, L_sC_sU)$ generates a digital $(t,s)$-sequence.
\end{enuroman} 
\end{lemma}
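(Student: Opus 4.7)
The plan is to reduce the sequence statement to the net-level statement Lemma~\ref{lem:tval-invariant} by analyzing the upper-left $m\times m$ truncations of the infinite generating matrices.

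First I would establish a level-by-level characterization of $(t,s)$-sequences, namely that $(C_1,\ldots,C_s)$ generates a digital $(t,s)$-sequence if and only if for every $m\ge 1$ the truncated tuple $(C_1^{(m)},\ldots,C_s^{(m)})$ generates a digital $(t,m,s)$-net. For this, fix $k,m\ge 0$ and write $n=k2^m+n_0$ with $0\le n_0<2^m$; the first $m$ digits of $n$ equal $(z_1(n_0),\ldots,z_m(n_0))$, while the digits $z_{m+1}(n),z_{m+2}(n),\ldots$ depend only on $k$. Hence the first $m$ entries of $\bsy_{n,j}=C_j\cdot(z_1(n),z_2(n),\ldots)^\top$ take the form $C_j^{(m)}(z_1(n_0),\ldots,z_m(n_0))^\top+\bsalpha_{k,j,m}$ for some $\bsalpha_{k,j,m}\in\Ftwo^m$ independent of $n_0$. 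Consequently the block $\{[\bsx_{k2^m}]_m,\ldots,[\bsx_{(k+1)2^m-1}]_m\}$ is a coordinate-wise dyadic (XOR) shift of the digital net generated by $(C_1^{(m)},\ldots,C_s^{(m)})$. Since dyadic shifts permute the elementary intervals of any fixed shape, the $(t,m,s)$-net property is preserved; the ``only if'' direction is the special case $k=0$.

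Next I would verify the block identity $(LCU)^{(m)}=L^{(m)}C^{(m)}U^{(m)}$ for $L\in\Lset[\infty]$, arbitrary $C\in\Ftwo^{\bN\times\bN}$, and $U\in\Uset[\infty]$: for $1\le i,j\le m$, the $(i,j)$-entry $\sum_{k_1,k_2}L_{ik_1}C_{k_1k_2}U_{k_2j}$ is forced by lower triangularity of $L$ to have $k_1\le i\le m$, and by upper triangularity of $U$ to have $k_2\le j\le m$, so only the top-left $m\times m$ blocks contribute. Moreover, $L^{(m)}\in\Lset$ and $U^{(m)}\in\Uset$ are non-singular, since any non-singular triangular matrix over $\Ftwo$ has all-ones diagonal.

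Putting these pieces together: $(L_1C_1U,\ldots,L_sC_sU)$ generates a digital $(t,s)$-sequence iff for every $m\ge 1$ the tuple $(L_1^{(m)}C_1^{(m)}U^{(m)},\ldots,L_s^{(m)}C_s^{(m)}U^{(m)})$ generates a $(t,m,s)$-net, and by Lemma~\ref{lem:tval-invariant} applied with $G=U^{(m)}$ this happens iff $(C_1^{(m)},\ldots,C_s^{(m)})$ does for every $m$, iff $(C_1,\ldots,C_s)$ generates a $(t,s)$-sequence. The main obstacle I anticipate is the first step: because $C_j$ is not assumed triangular, the tail of the digit vector beyond position $m$ contributes a nontrivial $\bsalpha_{k,j,m}$, and one must appeal to digital-shift-invariance of the $(t,m,s)$-net property to absorb it. Once that reduction is in place, the rest is a block-multiplication computation combined with the net-level lemma.
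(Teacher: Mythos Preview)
Your argument is correct. The reduction to $m\times m$ truncations, the block identity $(LCU)^{(m)}=L^{(m)}C^{(m)}U^{(m)}$, and the digital-shift argument absorbing the tail contribution $\bsalpha_{k,j,m}$ are all sound, and the final ``iff'' chain via Lemma~\ref{lem:tval-invariant} goes through cleanly since $L_i^{(m)}\in\Lset$ and $U^{(m)}\in\Uset$.

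Your route differs from the paper's. The paper does not spell out a proof: it simply remarks that left-multiplication by $L_i$ and right-multiplication by $U$ preserve the $t$-value by slight adaptations of results from the scrambling literature (Faure--Tezuka and Tezuka), together with the observation that $L_i^{-1}\in\Lset[\infty]$ and $U^{-1}\in\Uset[\infty]$. Your approach is self-contained: you reduce directly to the finite net-level invariance (Lemma~\ref{lem:tval-invariant}) by first establishing the equivalence ``$(C_1,\ldots,C_s)$ generates a $(t,s)$-sequence $\Leftrightarrow$ each $(C_1^{(m)},\ldots,C_s^{(m)})$ generates a $(t,m,s)$-net'' --- which is exactly the equivalence \eqref{item:BC}$\Leftrightarrow$\eqref{item:Cm} that the paper records later as part of Lemma~\ref{lem:BCandJBC}. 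What you gain is an explicit proof that does not send the reader to external references; what the paper's version gains is brevity and a direct link to the established scrambling framework. One small remark: since you rely on a fact the paper proves only afterwards, in a write-up you would either move your shift-invariance argument forward or point to Lemma~\ref{lem:BCandJBC} once it is available.
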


\begin{proof}
A slight adaption of the proof of \cite[Proposition~1]{FaureTezuka}
(resp.\ \cite[Theorem~1]{Tezuka})
shows that multiplying $L_i$ from left (resp.\ multiplying $U$ from right) does not change the $t$-value.
Note that here we used that $L_i^{-1}$ exists in $\Lset[\infty]$
and $U^{-1}$ exists in $\Uset[\infty]$.
\end{proof}

The following results point out relations between digital nets and sequences.
\begin{lemma}[{\cite[Lemma~4.22]{Niederreiter1992rng}}]\label{lem:seq-to-net}
Let $\{\bsx_i\}_{i \geq 0}$ be a $(t,s)$-sequence in base $2$.
Then $\{(\bsx_i, i 2^{-m})\}_{i=0}^{2^m -1}$ is a $(t,m,s+1)$-net in base $2$.
\end{lemma}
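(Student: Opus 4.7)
The plan is to verify the $(t,m,s+1)$-net condition directly from the definition of a $(t,s)$-sequence. Fix an elementary interval
\[
B = \prod_{j=1}^{s+1} [a_j/2^{c_j}, (a_j+1)/2^{c_j}) \subset [0,1)^{s+1}
\]
with $\sum_{j=1}^{s+1} c_j = m-t$, and write $c := c_{s+1}$. The goal is to show that exactly $2^t$ of the points $(\bsx_i, i/2^m)$, $0 \leq i < 2^m$, lie in $B$.

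First I would peel off the last coordinate. Since $c \leq m-t \leq m$, the condition $i/2^m \in [a_{s+1}/2^c, (a_{s+1}+1)/2^c)$ is equivalent to $k\,2^{m-c} \leq i < (k+1)2^{m-c}$ with $k := a_{s+1}$. Hence the admissible indices form the block $\{k2^{m-c}, \dots, (k+1)2^{m-c}-1\}$ of size $2^{m-c}$, and the problem reduces to counting how many of the corresponding $\bsx_i$ lie in $B' := \prod_{j=1}^{s} [a_j/2^{c_j}, (a_j+1)/2^{c_j})$.

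Next I would apply the definition of a $(t,s)$-sequence with $m-c$ in place of $m$: the set $\{[\bsx_{k2^{m-c}}]_{m-c}, \dots, [\bsx_{(k+1)2^{m-c}-1}]_{m-c}\}$ is a $(t,m-c,s)$-net in base $2$. Since $\sum_{j=1}^{s} c_j = m-t-c$, the volume of $B'$ equals $2^{t-(m-c)}$, which is exactly the size controlled by the net property, so $B'$ contains precisely $2^t$ of these truncated points.

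The final step is to remove the truncation. For each $j \leq s$ we have $c_j \leq m-t-c \leq m-c$, hence the first $c_j$ binary digits of the prescribed expansion of $[x_{i,j}]_{m-c}$ coincide with those of $x_{i,j}$. Consequently $[\bsx_i]_{m-c} \in B'$ if and only if $\bsx_i \in B'$, so the count of $2^t$ is preserved, proving the lemma. The only real subtlety is precisely this interchange of truncation and membership in an elementary interval; the rest is bookkeeping with the inequality $c \leq m-t$ that lets the reduction step go through.
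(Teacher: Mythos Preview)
The paper does not give its own proof of this lemma; it simply cites \cite[Lemma~4.22]{Niederreiter1992rng}. Your argument is the standard one from that reference: peel off the last coordinate to isolate a block of $2^{m-c}$ consecutive indices, invoke the $(t,m-c,s)$-net property of that block from the sequence definition, and observe that $(m-c)$-digit truncation does not affect membership in elementary intervals whose side lengths are at least $2^{-(m-c)}$. The bookkeeping is correct, including the inequality $c\le m-t$ ensuring $m-c\ge t$ so the net property applies meaningfully.

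One minor caveat worth noting (which is really about the lemma's statement rather than your proof): the paper uses the truncation-based definition of $(t,s)$-sequences, allowing points in $[0,1]^s$, while a $(t,m,s+1)$-net lives in $[0,1)^{s+1}$. Your equivalence ``$[\bsx_i]_{m-c}\in B'$ iff $\bsx_i\in B'$'' can fail at the right endpoint if a coordinate's prescribed expansion is eventually all $1$'s. In Niederreiter's original Lemma~4.22 the sequence is assumed to lie in $[0,1)^s$, so this does not arise there; in the present paper the lemma is only applied to digital sequences, where one typically sidesteps the issue. Your handling of the truncation step is as clean as the statement permits.
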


\begin{lemma}\label{lem:BCandJBC}
Let $C_1, \dots, C_s \in\bF_2^{\bN\times\bN}$.
Then the following are equivalent.
\begin{enuroman}
\item\label{item:BC}
$(C_1, \dots, C_s)$ generates a digital $(t,s)$-sequence.
\item \label{item:Cm}
$(C_1^{(m)}, \dots, C_s^{(m)})$ generates a digital $(t,m,s)$-net for every $m\in\mathbb{N}$.
\item \label{item:JBC}
$(J_m, C_1^{(m)}, \dots, C_s^{(m)})$ generates a digital $(t,m,s+1)$-net for every $m\in\mathbb{N}$.
\end{enuroman} 
\end{lemma}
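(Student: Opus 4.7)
The plan is to establish the cycle of implications (i) $\Rightarrow$ (iii) $\Rightarrow$ (ii) $\Rightarrow$ (i). For (i) $\Rightarrow$ (iii), I would apply Lemma~\ref{lem:seq-to-net} to conclude that $\{(\bsx_n, n/2^m) : 0 \le n < 2^m\}$ is a $(t,m,s+1)$-net, and then identify this point set, up to a coordinate permutation that does not affect the net property, with the digital net generated by $(J_m, C_1^{(m)}, \dots, C_s^{(m)})$. This reduces to two direct verifications: first, that $\phi_m(J_m \cdot (z_1(n), \dots, z_m(n))^\top) = n/2^m$ for $0 \le n < 2^m$, since $J_m$ reverses the coordinate order and $\phi_m$ reassembles the digits in the reversed powers of $2$; second, that for $n < 2^m$ the $j$-th coordinate of $[\bsx_n]_m$ coincides with $\phi_m(C_j^{(m)} \cdot (z_1(n), \dots, z_m(n))^\top)$, which holds because $z_i(n) = 0$ for $i > m$.

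For (iii) $\Rightarrow$ (ii), I would invoke the standard fact that projecting a $(t,m,s+1)$-net onto any subset of $s$ coordinates yields a $(t,m,s)$-net: any elementary box of volume $2^{t-m}$ in $[0,1)^s$ lifts to an elementary box of the same volume in $[0,1)^{s+1}$ by taking the omitted coordinate to be $[0,1)$, and the projection preserves point counts. Dropping the first coordinate then converts the assertion for $(J_m, C_1^{(m)}, \dots, C_s^{(m)})$ into the assertion for $(C_1^{(m)}, \dots, C_s^{(m)})$.

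The main obstacle is (ii) $\Rightarrow$ (i), where the net property of the upper-left submatrices must be upgraded to cover every block $\{[\bsx_{k2^m + l}]_m : 0 \le l < 2^m\}$, not merely the initial block $k = 0$. Writing $n = k 2^m + l$ with $0 \le l < 2^m$, one has $z_i(n) = z_i(l)$ for $1 \le i \le m$ and $z_i(n) = z_{i-m}(k)$ for $i > m$, so the first $m$ entries of $C_j \cdot (z_1(n), z_2(n), \dots)^\top$ decompose as $C_j^{(m)} \cdot (z_1(l), \dots, z_m(l))^\top + \bsv_{k,j}$ for some $\bsv_{k,j} \in \Ftwo^m$ that depends only on $k$. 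Applying $\phi_m$ coordinatewise shows that the $k$-th truncated block is the digital shift of the digital net generated by $(C_1^{(m)}, \dots, C_s^{(m)})$ by the vector $(\phi_m(\bsv_{k,1}), \dots, \phi_m(\bsv_{k,s}))$. Since digital shifts permute the elementary subintervals of each fixed shape $\prod_{i=1}^s [a_i/2^{c_i}, (a_i+1)/2^{c_i})$ among themselves, the $(t,m,s)$-net property is preserved, and (i) follows.
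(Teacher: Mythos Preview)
Your proof is correct and follows the same cycle of implications (i) $\Rightarrow$ (iii) $\Rightarrow$ (ii) $\Rightarrow$ (i) as the paper, invoking Lemma~\ref{lem:seq-to-net} for (i) $\Rightarrow$ (iii) and the projection observation for (iii) $\Rightarrow$ (ii). The only difference is that for (ii) $\Rightarrow$ (i) the paper simply cites \cite[Theorem~4.36]{Niederreiter1992rng}, whereas you supply a self-contained argument via the digital-shift decomposition of the $k$-th block; your argument is exactly the content of that cited result specialized to base $2$, so the approaches are essentially the same with yours being more explicit.
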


\begin{proof}
\eqref{item:Cm} implies \eqref{item:BC} by \cite[Theorem~4.36]{Niederreiter1992rng}.
Clearly \eqref{item:JBC} shows \eqref{item:Cm}.
\eqref{item:BC} implies \eqref{item:JBC} by Lemma~\ref{lem:seq-to-net}. 
\end{proof}

Having $t=0$ is related to LU decomposability.
In particular, we have a characterization of digital $(0,1)$-sequences and digital $(0,m,2)$-nets.
\begin{lemma}\label{lem:IB-0m2net-structure}
Let $B \in \FtwoMat$. Then
$(J_m,B)$ generates a digital $(0,m,2)$-net if and only if
there exist $L \in \Lset$ and $U \in \Uset$ such that $B = LU$.
\end{lemma}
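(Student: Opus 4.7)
The plan is to translate the $(0,m,2)$-net property into a family of non-singularity conditions on the leading principal submatrices of $B$, and then recognize those conditions as the classical criterion for $LU$-decomposability.

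First I would invoke the standard rank characterization of digital $(0,m,s)$-nets (see \cite{Niederreiter1992rng}): $(J_m,B)$ generates a digital $(0,m,2)$-net if and only if for every pair of non-negative integers $(d_1,d_2)$ with $d_1+d_2=m$, the $m\times m$ matrix $M_{d_1,d_2}$ obtained by stacking the first $d_1$ rows of $J_m$ on top of the first $d_2$ rows of $B$ is non-singular over $\Ftwo$.

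Next I would exploit the simple structure of $J_m$. Its first $d_1$ rows are supported on the last $d_1$ columns, where they form a $d_1\times d_1$ anti-diagonal block of determinant $1$. A block-determinant computation (or an appropriate column permutation) then yields $\det(M_{d_1,d_2})=\det(B^{(d_2)})$ over $\Ftwo$. Letting $d_2$ range over $0,1,\dots,m$ shows that the $(0,m,2)$-net condition is equivalent to requiring that $B^{(k)}$ be non-singular for every $1\le k\le m$.

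Finally I would invoke the classical fact that a square matrix admits a decomposition $B=LU$ with $L$ unit lower-triangular and $U$ unit upper-triangular if and only if every leading principal submatrix $B^{(k)}$ is non-singular: the ``only if'' follows from $B^{(k)}=L^{(k)}U^{(k)}$, and the ``if'' is a short induction on $m$ using a single step of Gaussian elimination. The detail worth emphasizing is that over $\Ftwo$ a non-singular triangular matrix is automatically unit-triangular (its diagonal entries lie in $\Ftwo\setminus\{0\}=\{1\}$), so $\Lset$ and $\Uset$ coincide with the sets of unit lower- and unit upper-triangular matrices, and the factorization produced therefore lands in $\Lset\times\Uset$ as required. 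No step looks like a genuine obstacle; the only pitfall is to keep track of this $\Ftwo$-specific identification when translating between standard $LU$-decomposition results and the sets $\Lset,\Uset$ used in the paper.
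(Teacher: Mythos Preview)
Your argument is correct: the rank criterion for digital $(0,m,2)$-nets, the block computation reducing $\det M_{d_1,d_2}$ to $\det B^{(d_2)}$ via the anti-diagonal block of $J_m$, and the leading-principal-minor criterion for $LU$-factorization (together with your observation that over $\Ftwo$ non-singular triangular is automatically unit-triangular) all go through as stated. The paper itself gives no self-contained proof here---it simply cites \cite[Lemma~3.1]{Kajiura}---so your direct argument is more explicit than what the paper records but is essentially the standard reasoning underlying that reference.
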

\begin{proof}
This is essentially proved in \cite[Lemma~3.1]{Kajiura}.
\end{proof}

\begin{lemma} \label{lem:char-0m2met-general}
Let $m \geq 1$ be an integer and $A,B \in \FtwoMat$.
Then $(A,B)$ generates a $(0,m,2)$-net if and only if 
there exist $L \in \Lset$, $U \in \Uset$ and non-singular $M \in \FtwoMat$
such that
$(A,B) = (JM, LUM).$
\end{lemma}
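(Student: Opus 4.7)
The strategy is to reduce to Lemma~\ref{lem:IB-0m2net-structure}, which handles the special case $A=J$, by using Lemma~\ref{lem:tval-invariant} to renormalize the first generator to $J$ via right-multiplication by a suitable non-singular matrix.

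For the direction \eqref{eq:t-preserving-net-equiv2}$\Rightarrow$\eqref{eq:t-preserving-net-equiv1} (writing numerically: the backward direction), suppose $(A,B)=(JM,LUM)$ with $L\in\Lset$, $U\in\Uset$, and $M$ non-singular. First I would apply Lemma~\ref{lem:IB-0m2net-structure} to see that $(J,LU)$ generates a $(0,m,2)$-net. Then I would apply Lemma~\ref{lem:tval-invariant} with trivial left factors $L_1=L_2=I$ and right factor $G=M$ to transport this to $(JM,LUM)=(A,B)$.

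For the forward direction, the key preliminary observation is that $A$ must be non-singular: the $(0,m,2)$-net condition applied to the slabs $[a/2^m,(a+1)/2^m)\times[0,1)$ forces the first coordinates of the $2^m$ points to be the $2^m$ distinct values in $\{k/2^m:0\le k<2^m\}$, so $\bsz\mapsto A\bsz$ is a bijection on $\Ftwo^m$. Using $J^2=I$, I would then set $M:=JA$, which is non-singular and satisfies $A=JM$. Lemma~\ref{lem:tval-invariant} with $G=M^{-1}$ (and $L_1=L_2=I$) shows that $(J,BM^{-1})$ generates a $(0,m,2)$-net. Lemma~\ref{lem:IB-0m2net-structure} then supplies $L\in\Lset$ and $U\in\Uset$ with $BM^{-1}=LU$, i.e.\ $B=LUM$, yielding the claimed decomposition.

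There is no real obstacle: the proof is a two-line reduction once one notices that Lemma~\ref{lem:IB-0m2net-structure} together with the right-multiplication invariance of Lemma~\ref{lem:tval-invariant} covers everything, with the only extra ingredient being the non-singularity of $A$ extracted from the net property.
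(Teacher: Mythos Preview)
Your proof is correct and follows essentially the same reduction the paper has in mind: set $M=JA$, use Lemma~\ref{lem:tval-invariant} to pass between $(A,B)$ and $(J,BM^{-1})$, and invoke Lemma~\ref{lem:IB-0m2net-structure}. The paper's own proof merely points to the analogous reduction carried out for Theorem~\ref{thm:char-0m3met-general}; you have spelled out exactly that argument, and in fact you supply the one detail the paper leaves implicit, namely that the $(0,m,2)$-net property forces $A$ to be non-singular so that $M=JA$ is invertible.
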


\begin{proof}
This lemma can be reduced to Lemma~\ref{lem:IB-0m2net-structure}
by a similar argument to the one in Section~\ref{sec:0m3net} that reduces Theorem~\ref{thm:char-0m3met-general} to Proposition~\ref{prop:char-0m3met}.
\end{proof}

\begin{lemma}\label{lem:BLUinfty}
Let $B \in \bF_2^{\bN\times\bN}$.
Then $B$ generates a digital $(0,1)$-sequence if and only if there exist $L\in\mathcal{L}_\infty$ and $U\in\mathcal{U}_\infty$ such that 
$B=LU.$
\end{lemma}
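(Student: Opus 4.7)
The plan is to reduce the lemma to its finite analogue, Lemma~\ref{lem:IB-0m2net-structure}, through Lemma~\ref{lem:BCandJBC}. First I would apply Lemma~\ref{lem:BCandJBC} with $s=1$ and then Lemma~\ref{lem:IB-0m2net-structure} to rewrite the ``$(0,1)$-sequence'' condition on $B$ as: for every $m\in\bN$ there exist $L_m\in\Lset$ and $U_m\in\Uset$ with $B^{(m)}=L_m U_m$. It then remains to show that this countable family of finite LU-decompositions exists if and only if $B$ itself admits an infinite decomposition $B=LU$ with $L\in\Lset[\infty]$ and $U\in\Uset[\infty]$.

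The main computational ingredient I will use is the block identity $(LU)^{(m)}=L^{(m)}U^{(m)}$, valid whenever $L$ is lower triangular and $U$ is upper triangular. This holds because for $1\le i,j\le m$
\[
(LU)_{i,j} = \sum_{k} L_{i,k} U_{k,j} = \sum_{k=1}^{m} L^{(m)}_{i,k} U^{(m)}_{k,j},
\]
since the sum is restricted to $k\le \min(i,j)\le m$ by triangularity. The direction ($\Leftarrow$) of the lemma is then immediate: if $B=LU$, then $B^{(m)}=L^{(m)}U^{(m)}$ for every $m$, with the truncations lying in $\Lset$ and $\Uset$ respectively.

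For the direction ($\Rightarrow$), the step I expect to be the main obstacle is gluing the finite decompositions consistently. The crucial observation is that over $\bF_2$ every non-singular lower triangular matrix has $1$'s on the diagonal (and similarly for upper triangular ones), so the LU-decomposition of a non-singular square matrix with factors in $\Lset$ and $\Uset$ is unique. Each $B^{(m)}$ is indeed non-singular: this follows from the $(0,m,2)$-net property of $(J_m,B^{(m)})$ applied to the elementary interval $[0,2^{-m})\times[0,1)$. Applying uniqueness to the two factorisations $B^{(m)}=L_m U_m = L_{m+1}^{(m)} U_{m+1}^{(m)}$, where the second equality uses the block identity, forces $L_{m+1}^{(m)}=L_m$ and $U_{m+1}^{(m)}=U_m$. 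Hence the finite factors are consistent and assemble into infinite matrices $L\in\Lset[\infty]$ and $U\in\Uset[\infty]$, and one more application of the block identity confirms $(LU)^{(m)}=B^{(m)}$ for all $m$, i.e.\ $B=LU$.
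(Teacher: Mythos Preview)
Your proposal is correct and follows essentially the same route as the paper: reduce via Lemma~\ref{lem:BCandJBC} to the finite statement Lemma~\ref{lem:IB-0m2net-structure}, use the block identity $(LU)^{(m)}=L^{(m)}U^{(m)}$ to show the finite factors are consistent (the paper phrases this as ``comparing the upper left $n\times n$ submatrix''), and assemble the infinite $L$ and $U$. The only minor difference is the converse direction: the paper invokes Lemma~\ref{lem:tval-invariant-sequence} directly, whereas you run the same finite reduction backwards; both are fine, and your version makes the uniqueness and non-singularity steps explicit where the paper leaves them implicit.
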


\begin{proof}
First we assume that $B$ generates a digital $(0,1)$-sequence.
Then it follows from Lemma~\ref{lem:BCandJBC}
that $(J_m,B^{(m)})$ generates $(0,m,2)$-net for every $m\in\mathbb{N}$.
Thus by Lemma~\ref{lem:IB-0m2net-structure} there exist $L_m \in\mathcal{L}_m$ and $U\in\mathcal{U}_m$ such that 
%\begin{equation}\label{eq:BmLmUm}
$B^{(m)}=L_mU_m.$
%\end{equation}
By comparing the upper left $n \times n$ submatrix of this equation for $n \leq m$,
we have $L_m^{(n)} = L_n$ and $U_m^{(n)} = U_n$ for all $n \leq m$.
This implies that there exists unique $L \in \Lset[\infty]$ and $U \in \Uset[\infty]$
such that $L^{(m)} = L_m$ and $U^{(m)} = U_m$ holds for all $m$,
and hence we have $B=LU$. This shows the ``only if'' part.
The converse holds from Lemma~\ref{lem:tval-invariant-sequence}.
\end{proof}

\begin{remark}{\rm
Lemmas~\ref{lem:tval-invariant}--\ref{lem:BLUinfty} with appropriate modifications hold for digital nets and sequences over an arbitrary finite field
since the proofs are based on general linear algebra.
The results below uses that the base field is $\Ftwo$.}
\end{remark}

The first author and Larcher essentially determined all digital $(0,2)$-sequences in base $2$
generated by non-singular infinite upper-triangular matrices \cite[Proposition~4]{Hofer2010edc}.

\begin{lemma}\label{lem:alternative}
Let $U_1, U_2 \in\mathcal{U}_m$. Then $(J_m,U_1,U_2)$ generates $(0,m,3)$-net in base $2$
if and only if $U_2=P_mU_1$ holds. 
\end{lemma}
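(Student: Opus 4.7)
The plan is to simplify the triple via Lemma~\ref{lem:tval-invariant} to $(J_m, I_m, V)$ with $V \in \Uset$, unfold the $(0,m,3)$-net condition into an invertibility statement about many submatrices of $V$, and show that $V = P_m$ is the only solution. I would apply Lemma~\ref{lem:tval-invariant} with $G := U_1^{-1}$, $L_1 := J_m U_1 J_m$, and $L_2 = L_3 := I_m$. Conjugation by $J_m$ reverses row and column order, so $L_1$ is lower triangular with unit diagonal, hence $L_1 \in \Lset$; using $J_m^2 = I_m$ one checks that $L_1 J_m G = J_m$. The triple transforms into $(J_m, I_m, V)$ with $V := U_2 U_1^{-1} \in \Uset$, so it suffices to prove that $(J_m, I_m, V)$ generates a $(0,m,3)$-net if and only if $V = P_m$.

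Next, I would unfold the net condition via the standard row-linear-independence characterization of $(0,m,s)$-nets. It requires, for every $(d_1,d_2,d_3)$ with $d_1 + d_2 + d_3 = m$, that the first $d_1$ rows of $J_m$ (equal to $\bse_m,\dots,\bse_{m-d_1+1}$), the first $d_2$ rows of $I_m$ (equal to $\bse_1,\dots,\bse_{d_2}$), and the first $d_3$ rows of $V$ span $\Ftwo^m$. The $d_1 + d_2$ standard basis vectors cover coordinates $\{1,\dots,d_2\}\cup\{m-d_1+1,\dots,m\}$, so projecting onto the complementary $d_3$ coordinates reduces the condition to invertibility of the submatrix $(v_{i,j})_{1 \le i \le d_3,\, d_2 + 1 \le j \le d_2 + d_3}$. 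Writing $a := d_2 + 1$, $b := d_3$, the full collection of conditions becomes: for every $a, b \geq 1$ with $a + b \leq m + 1$, the submatrix $(v_{i,j})_{1 \le i \le b,\, a \le j \le a + b - 1}$ lies in $\mathrm{GL}_b(\Ftwo)$.

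I would then prove that an upper-triangular $V$ satisfies these conditions if and only if $V = P_m$. For sufficiency, with $V = P_m$ the $(i,j)$-entry of the relevant submatrix is $\binom{a+j-2}{i-1}$; regarding $\binom{x}{i-1}$ as an integer-valued polynomial of degree $i-1$ with leading coefficient $1/(i-1)!$ and evaluating at the $b$ consecutive integers $x_j = a - 2 + j$, a Vandermonde-type identity yields the integer determinant
\[
\prod_{i=1}^{b} \frac{1}{(i-1)!} \cdot \prod_{1 \leq j < k \leq b}(k - j) = 1,
\]
which is nonzero mod $2$. For necessity, I would induct on the column index $k$: assuming columns $1,\dots,k-1$ of $V$ agree with those of $P_m$, the conditions with $a + b = k + 1$ taken in order $b = 1, 2, \dots, k-1$ each give a linear equation for $v_{b,k}$ via cofactor expansion of the $b \times b$ submatrix along its last column. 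The coefficient of $v_{b,k}$ there is the determinant of the Pascal submatrix in rows $1,\dots,b-1$ and columns $k-b+1,\dots,k-1$, which equals $1$ by the sufficiency computation, so $v_{b,k}$ is uniquely determined; the sufficiency direction then forces $v_{b,k} = \binom{k-1}{b-1}$.

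The main obstacle is the combinatorial identity that every submatrix of $P_m$ consisting of the first $b$ rows and $b$ consecutive columns has integer determinant $1$. Once this Vandermonde-style computation is secured, both implications of the lemma follow by the mechanical reduction of the first paragraph and the column-by-column induction of the third.
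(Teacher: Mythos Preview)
Your argument is correct, but it takes a different route from the paper. Both proofs begin with the same reduction via Lemma~\ref{lem:tval-invariant}: conjugating by $G=U_1^{-1}$ and $L_1=J_mU_1J_m$ collapses the question to whether $(J_m,I_m,V)$ with $V=U_2U_1^{-1}\in\Uset$ is a $(0,m,3)$-net iff $V=P_m$. From there the paper simply cites existing results: the ``if'' direction follows because $(I,P_\infty)$ is Faure's $(0,2)$-sequence, so $(J,I,P)$ is a $(0,m,3)$-net by Lemma~\ref{lem:seq-to-net}; the ``only if'' direction is extracted from the proof of \cite[Proposition~4]{Hofer2010edc}. You instead give a fully self-contained combinatorial argument: you translate the net property into the invertibility of every block $(v_{i,j})_{1\le i\le b,\ a\le j\le a+b-1}$, verify this for $P_m$ via the Vandermonde-type identity $\det\bigl(\binom{a+j-2}{i-1}\bigr)_{i,j=1}^{b}=1$ over $\bZ$, and then force $V=P_m$ column by column, using that the cofactor of $v_{b,k}$ in the relevant $b\times b$ block is exactly one of these Pascal minors and hence equals $1$. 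Your approach is longer but more elementary---it requires no appeal to Faure's construction or to \cite{Hofer2010edc}, only the standard row-independence criterion for digital $(0,m,s)$-nets and a classical determinant evaluation---whereas the paper's proof is a two-line reduction to the literature.
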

\begin{proof}
The ``only if'' part is essentially derived in the proof of \cite[Proposition~4]{Hofer2010edc}.
Now we assume $U_2=P_mU_1$.
From the construction in \cite{Faure1982dds} and Lemma~\ref{lem:seq-to-net},
$(J,I,P)$ generates a $(0,m,3)$-net.
Then it follows from Lemma~\ref{lem:tval-invariant} with $(L_1,L_2,L_3) = (JU_1^{-1}J,I,I)$ and $G=U_1$
that $((JU_1^{-1}J)JU_1,IU_1,PU_1) = (J,U_1,PU_1)$ also generates a $(0,m,3)$-net. 
Thus we have proved the converse.
\end{proof}

\begin{proposition}%[{\cite[Proposition~4]{Hofer2010edc}}]
Let $U_1, U_2 \in \Uset[\infty]$. %(i.e., they are infinite non-singular upper-triangular matrices).
Then the following are equivalent:
\begin{enuroman}\label{thm:Hofer}
\item \label{eq:02seq-equiv1}
$(U_1, U_2)$ generates a digital $(0,2)$-sequence in base $2$. 
\item \label{eq:02seq-equiv2}
$U_2=P_\infty U_1$ holds.
\end{enuroman}
\end{proposition}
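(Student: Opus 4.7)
The plan is to reduce the infinite-dimensional equivalence to a family of finite-dimensional ones indexed by $m$, and then to apply Lemma~\ref{lem:alternative} level by level.

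First I would invoke Lemma~\ref{lem:BCandJBC} (with $s=2$) to rephrase (i) as: for every $m \in \bN$, the triple $(J_m, U_1^{(m)}, U_2^{(m)})$ generates a digital $(0,m,3)$-net in base $2$. Since $U_1, U_2 \in \Uset[\infty]$ are non-singular upper-triangular over $\Ftwo$, each diagonal entry is $1$, so every truncation $U_i^{(m)}$ again lies in $\mathcal{U}_m$. Hence Lemma~\ref{lem:alternative} applies at each level $m$ and converts the net condition at level $m$ into the matrix equality $U_2^{(m)} = P_m U_1^{(m)}$. Consequently (i) is equivalent to this family of identities holding for every $m \geq 1$.

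It then remains to show that the family $\{U_2^{(m)} = P_m U_1^{(m)}\}_{m \geq 1}$ is equivalent to the single infinite identity (ii). The key observation is the commutation $(P_\infty U_1)^{(m)} = P_m U_1^{(m)}$, valid for every $m$: for $1 \le i,j \le m$, the sum $(P_\infty U_1)_{ij} = \sum_k (P_\infty)_{ik}(U_1)_{kj}$ has nonzero summands only for $i \le k \le j \le m$, by upper-triangularity of both factors, and these coincide with the $(i,j)$-entry of $P_m U_1^{(m)}$. With this identity the equivalence is immediate: (ii) forces each truncated identity, and conversely a matrix equality that holds on every upper-left $m \times m$ block holds entrywise.

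I expect no real obstacle; the only point requiring care is the commutation of the $m \times m$ truncation with the product $P_\infty U_1$, but this is a one-line consequence of the upper-triangularity of both factors. The non-triviality of the argument is entirely hidden inside the already-cited Lemma~\ref{lem:alternative}, which provides the rigid form $U_2 = P U_1$ at each finite level.
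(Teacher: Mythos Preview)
Your argument is correct. The reduction via Lemma~\ref{lem:BCandJBC} to the family of finite statements $(J_m,U_1^{(m)},U_2^{(m)})$ is a $(0,m,3)$-net, the applicability of Lemma~\ref{lem:alternative} at each level (since $U_i\in\Uset[\infty]$ forces all diagonal entries to be $1$, hence $U_i^{(m)}\in\Uset$), and the truncation identity $(P_\infty U_1)^{(m)}=P_mU_1^{(m)}$ from upper-triangularity are all sound and yield both implications at once.

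The paper argues differently: it treats the two directions separately, citing \cite[Proposition~4]{Hofer2010edc} directly for \eqref{eq:02seq-equiv1}$\Rightarrow$\eqref{eq:02seq-equiv2}, and for the converse it works at the sequence level, using the Faure construction $(I,P_\infty)$ together with the sequence $t$-value invariance Lemma~\ref{lem:tval-invariant-sequence}. Your route instead passes everything through finite truncations and the single equivalence Lemma~\ref{lem:alternative}, which makes the proof more uniform and keeps it entirely within the lemmas stated in the paper; the price is the (easy) extra check that truncation commutes with the product $P_\infty U_1$. Substantively the two proofs rest on the same non-trivial input, since Lemma~\ref{lem:alternative} itself encapsulates the Hofer--Larcher argument.
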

\begin{proof} 
\eqref{eq:02seq-equiv1} implies \eqref{eq:02seq-equiv2} by \cite[Proposition~4]{Hofer2010edc}.
The converse follows from Lemma~\ref{lem:tval-invariant-sequence} and the construction in \cite{Faure1982dds}. 
\end{proof}

%%%%%%%%%%%%%%%%%%%%%%%%%%%%%%%%%%%%%%%%%%%%%%%%%%%%%%%%%%%%%%%
%\section{Characterization of digital $(0,m,3)$-nets in base~$2$}\label{sec:0m3net}
\section{Proof of Theorem \ref{thm:char-0m3met-general} and \ref{thm:char-02seq}}\label{sec:0m3net}
%Here we give a characterization of $(0,m,3)$-nets in base $2$
%generated by $(A,B,C)$ with $A,B,C \in \FtwoMat$.
%\begin{theorem} \label{thm:char-0m3met-general}
%Let $m \geq 1$ be an integer and $A,B,C \in \FtwoMat$.
%Then the following are equivalent.
%\begin{enuroman}
%\item \label{eq:0m3net-general-equiv1}
%$(A,B,C)$ generates a $(0,m,3)$-net.
%\item \label{eq:0m3net-general-equiv2}
%There exist $L_1, L_2 \in \Lset$, $U \in \Uset$ and non-singular $M \in \FtwoMat$
%such that
%\[
%(A,B,C) = (JM, L_1UM, L_2PUM).
%\]
%\end{enuroman}
%\end{theorem}
Having all the auxiliary results of the previous section at hand, the proofs of our theorems are rather short.

\begin{proof}[Proof of Theorem~\ref{thm:char-0m3met-general}]
Let $M=JA$.
%Then $t(A,B,C)=0$ implies that $A$ and $M$ are non-singular.
By putting $B' = BM^{-1}$ and $C' = CM^{-1}$,
$t(A,B,C)=0$ is equivalent to $t(JM,B'M,C'M)=0$,
which is equivalent to $t(J,B',C')=0$ by Lemma~\ref{lem:tval-invariant}.
Hence Theorem~\ref{thm:char-0m3met-general} reduces to the case $A=I$,
i.e., it suffices to show the following claim.
\begin{proposition} \label{prop:char-0m3met}
Let $m \geq 1$ be an integer and $B,C \in \FtwoMat$.
Then the following are equivalent.
\begin{enuroman}
\item \label{eq:0m3net-equiv1}
$(J,B,C)$ generates a $(0,m,3)$-net in base $2$.
\item \label{eq:0m3net-equiv2}
There exist $L_1, L_2 \in \Lset$ and $U \in \Uset$
such that $B=L_1U$ and $C=L_2PU$.
\end{enuroman}
\end{proposition}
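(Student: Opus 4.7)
The plan is to prove the two directions separately, with the ``hard'' direction (i)$\Rightarrow$(ii) reduced to Lemma~\ref{lem:alternative} via a normalization step.

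For (ii)$\Rightarrow$(i), I would apply Lemma~\ref{lem:alternative} to conclude that $(J, U, PU)$ generates a $(0,m,3)$-net (trivially $U,PU \in \Uset$ since $P,U$ are upper-triangular and non-singular), and then apply Lemma~\ref{lem:tval-invariant} with left-multipliers $(I, L_1, L_2)$ and right-multiplier $G = I$ to obtain that $(J, L_1 U, L_2 P U) = (J, B, C)$ generates a $(0,m,3)$-net.

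For (i)$\Rightarrow$(ii), the first step is to project. Since $(J,B,C)$ generates a $(0,m,3)$-net, the pair $(J,B)$ generates a $(0,m,2)$-net and $(J,C)$ generates a $(0,m,2)$-net. Lemma~\ref{lem:IB-0m2net-structure} then yields decompositions $B = L_1 U$ and $C = L_2 U'$ with $L_1, L_2 \in \Lset$ and $U, U' \in \Uset$. The main obstacle is that a priori the two upper-triangular factors $U$ and $U'$ coming from the two separate $(0,m,2)$ projections are unrelated, whereas to conclude (ii) I need $U' = P U$ (equivalently, the same $U$ must serve both decompositions up to the relation by $P$).

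To overcome this I would use Lemma~\ref{lem:tval-invariant} again: since $(J, L_1 U, L_2 U')$ generates a $(0,m,3)$-net, multiplying on the left by $(I, L_1^{-1}, L_2^{-1}) \in \{I\} \times \Lset \times \Lset$ and on the right by $G = I$ preserves the $t$-value (note $J$ is unchanged since we left-multiply by $I$). Hence $(J, U, U')$ also generates a $(0,m,3)$-net. Lemma~\ref{lem:alternative} then forces $U' = P U$, and therefore $C = L_2 U' = L_2 P U$, completing the proof. The whole argument is short once the auxiliary lemmas are in place; the conceptual point is that simultaneous upper-triangular normalization across the two non-$J$ coordinates is exactly what reduces the three-dimensional problem to the already-settled case of Lemma~\ref{lem:alternative}.
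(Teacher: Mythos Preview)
Your proof is correct and follows essentially the same route as the paper's own argument: in both directions you reduce to Lemma~\ref{lem:alternative} by stripping off the lower-triangular factors via Lemma~\ref{lem:tval-invariant}, after first invoking Lemma~\ref{lem:IB-0m2net-structure} on the two $(0,m,2)$ projections in the forward direction. The only cosmetic difference is the direction in which you apply the $t$-value invariance (you multiply by $L_i$ versus the paper's $L_i^{-1}$), which is immaterial since Lemma~\ref{lem:tval-invariant} states an equivalence.
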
 
We now prove Proposition~\ref{prop:char-0m3met}.
In this proof, for matrices $Q,R,S \in \FtwoMat$
let $t(Q,R,S)$ be the $t$-value of the digital net generated by $(Q,R,S)$.
First we assume \eqref{eq:0m3net-equiv2}.
By Lemma~\ref{lem:tval-invariant} with $(L_1,L_2,L_3) = (I,L_1^{-1},L_2^{-1})$ and $G=I$
we have
\begin{equation*}\label{eq:JBC}
t(J,B,C)=t(J,L_1U,L_2PU)=t(J,U,PU) = 0.
\end{equation*}
where the last equality follows from Lemma~\ref{lem:alternative}.
Thus we have \eqref{eq:0m3net-equiv1}.

We now assume \eqref{eq:0m3net-equiv1}.
By Lemma~\ref{lem:IB-0m2net-structure},
there exist $L_1, L_2 \in \Lset$ and $U_1,U_2 \in \Uset$
such that $B=L_1U_1$ and $C=L_2U_2$.
Hence, by Lemma~\ref{lem:tval-invariant} with $(L_1,L_2,L_3) = (I,L_1,L_2)$ and $G=I$
we have
\[
t(J,U_1,U_2)=t(J,L_1U_1,L_2U_2)=t(J,B,C)=0.
\]
Finally, Lemma~\ref{lem:alternative} implies $U_2 = PU_1$, which shows \eqref{eq:0m3net-equiv2}.
\end{proof}

%%%%%%%%%%%%%%%%%%%%%%%%%%%%%%%%%%%%%%%%%%%%%%%%%%%%%%%%%%%%%%%%%%%%%%%%
%\section{Characterization of digital $(0,2)$-sequences in base~$2$}

%\begin{theorem}\label{thm:char-02seq}
%Let $B,C\in\bF_2^{\infty\times\infty}$. Then the following are equivalent. 
%\begin{enuroman}
%\item \label{item:thmBC}
%$(B,C)$ generates a digital $(0,2)$-sequence,
%\item \label{item:thmLPU}
%There exist $L_1,L_2\in\mathcal{L}_\infty$ and $U\in\mathcal{U}_\infty$ such that $B=L_1U$ and $C=L_2PU$. 
%\end{enuroman}
%\end{theorem} 
\begin{proof}[Proof of Theorem~\ref{thm:char-02seq}]
\eqref{item:thmLPU} implies \eqref{item:thmBC} by Lemma~\ref{lem:tval-invariant-sequence} and Proposition~\ref{thm:Hofer}. 
Let us now assume \eqref{item:thmBC}.
Then by Lemma~\ref{lem:BLUinfty}
there exist $L_1,\,L_2\in\mathcal{L}_\infty$ and $U_1,\,U_2\in\mathcal{U}_\infty$ such that $B=L_1U_1$ and $C=L_2U_2$.
We apply Lemma~\ref{lem:tval-invariant-sequence}
and obtain that $(U_1,U_2)$ generates a $(0,2)$-sequence in base $2$.
Finally Proposition~\ref{thm:Hofer} brings $U_2=P_\infty U_1$ and the result follows. 

\end{proof}

%%%%%%%%%%%%%%%%%%%%%%%%%%%%%%%%%%%%%%%%%%%%%%%%%%%%%%%%%%%%%%%%%%
\section*{Acknowledgments}

The first author is supported by the Austrian Science Fund (FWF):
Project F5505-N26, which is a part of the Special Research Program
``Quasi-Monte Carlo Methods: Theory and Applications''. 
The second author is supported by %JST CREST and
Grant-in-Aid for JSPS Fellows (No.\ 17J00466).

%\bibliographystyle{plain}
%\bibliographystyle{model1b-num-names}

%\bibliography{t0ord3}

\end{document}